\newtheorem{thm}{Theorem}[section]
\newtheorem*{conj1}{Extended Lindel\"{o}f Hypothesis}
\newtheorem{lem}[thm]{Lemma}
\newtheorem*{thm*}{Theorem}
\begin{document}

\title{The distribution of lattice points with relatively $r$-prime}
\author{Wataru Takeda}
\address{Department of Mathematics,Kyoto University, Kitashirakawa Oiwake-cho, Sakyo-ku, Kyoto 606-8502,
Japan}
\email{takeda-w@math.kyoto-u.ac.jp}
\keywords{lattice point; approximation formula; Extended Lindel\"{o}f hypothesis; Gauss Circle Problem}
\subjclass[2010]{Primary 11N45, Secondary 11P21,11R42,52C07}

\begin{abstract}
The distribution of lattice points with relatively $r$-prime is related to problems in the Number Theory such as the Extended Lindel\"{o}f Hypothesis and the Gauss Circle Problem. It is known that Sittinger's result is improved on the assumption of the Extended Lindel\"{o}f Hypothesis. In this paper, we improve Sittinger's result without assuming the Extended Lindel\"{o}f hypothesis. 
\end{abstract}
\maketitle
\section{Introduction}
The Geometry of Number was used to consider problems of the Number Theory by Minkowski. K. Rogers and H. P. F. Swinnerton-Dyer extended the Geometry of Number over number fields. Let $K$ be a number field and let $\mathcal{O}_K$ be its ring of integers. We consider an ordered $m$-tuple of ideals $(\mathfrak{a}_1, \mathfrak{a}_2,\ldots,\mathfrak{a}_m)$ of $\mathcal{O}_K$ as a lattice point in $K^m$. We say that a lattice point $(\mathfrak{a}_1, \mathfrak{a}_2,\ldots,\mathfrak{a}_m)$ is relatively $r$-prime, if there exists no prime ideal $\mathfrak{p}$ such that $\mathfrak{a}_1, \mathfrak{a}_2,\ldots,\mathfrak{a}_m\subset \mathfrak{p}^r$. 

In the case $K=\mathbf{Q}$, S. J. Benkoski proved that the density of the set of $m$-tuples of integers which are relatively $r$-prime is $1/\zeta(rm)$ in 1976 \cite{Be76}. And in general case, B. D. Sittinger proved the number of lattice points $(\mathfrak{a}_1, \mathfrak{a}_2,\ldots,\mathfrak{a}_m)$ in $K^m$ with relatively $r$-prime and $\mathfrak{Na_i}\le x$ for all $i=1,\ldots,m$ is \[\frac{c^m}{\zeta_K(rm)}x^m+\text{(Error term)},\]
where $\zeta_K$ is the Dedekind zeta function over $K$ and $c$ is a constant real number depending only on $K$ \cite{St10}.

Let $V_m^r(x,K)$ denote the number of lattice points $(\mathfrak{a}_1, \mathfrak{a}_2,\ldots,\mathfrak{a}_m)$ with relatively $r$-prime and $\mathfrak{Na_i}\le x$ for all $i=1,\ldots,m$ and let $E_m^r(x,K)$ denote its error term, i.e. $E_m^r(x,K)=V_m^r(x,K)-(cx)^m/\zeta_K(rm)$. 

We considered better upper bound of $E_m^r(x,K)$ on the assumption of the Extended Lindel\"{o}f Hypothesis in \cite{Ta16}. In this paper, we use some classical results to improve Sittinger's estimation without assuming the Extended Lindel\"{o}f hypothesis. Our main theorem is the following results.

\begin{thm*}
Let $\alpha(n)$ and $\beta(n)$ be constants \[\alpha(n)=\left\{
\begin{array}{ll}
\frac2n-\frac8{n(5n+2)}&\text{ if } 3\le n\le6,\\
\frac2n-\frac3{2n^2}&\text{ if } 7\le n\le9\rule[-2mm]{0mm}{6mm},\\
\frac3{n+6}-\varepsilon&\text{ if } n\ge10,
\end{array}
\right.
\text{ and \ } \beta(n)=\left\{
\begin{array}{ll}
\frac{10}{5n+2}&\text{ if } 3\le n\le6,\\
\frac 2n&\text{ if } 7\le n\le9\rule[-2mm]{0mm}{6mm},\\
0&\text{ if } n\ge10.
\end{array}
\right.\]

When $n=[K:\mathbf{Q}]$, then we get \[E_m^r(x,K)=\left\{
\begin{array}{ll}
O(x^{m-\alpha(n)}(\log x)^{\beta(n)})&\text{ if } rm\ge3,\\
O(x^{2-\alpha(n)}(\log x)^{2\beta(n)+1})&\text{ if } r=1 \text{ and } m=2,\\
O(x^{1-\alpha(n)/2}(\log x)^{2\beta(n)})&\text{ if } r=2 \text{ and } m=1,
\end{array}
\right.\]
for all $\varepsilon>0$.
\end{thm*}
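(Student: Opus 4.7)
The natural approach is Möbius inversion on the ideal monoid of $\mathcal{O}_K$ combined with the best available pointwise bounds on the error term in Landau's ideal-counting asymptotic. Setting $F_K(y) := \#\{\mathfrak{a} \subset \mathcal{O}_K : \mathfrak{N}\mathfrak{a} \leq y\} = cy + \Delta_K(y)$ and using the identity $\mathbf{1}_{r\text{-power-free}}(\mathfrak{n}) = \sum_{\mathfrak{b}^r \mid \mathfrak{n}} \mu_K(\mathfrak{b})$, one gets
\[
V_m^r(x,K) = \sum_{\mathfrak{N}\mathfrak{b} \leq x^{1/r}} \mu_K(\mathfrak{b})\, F_K(x/\mathfrak{N}\mathfrak{b}^r)^m.
\]
Expanding the $m$-th power binomially isolates the main term $c^m x^m/\zeta_K(rm)$ (modulo a tail of the Möbius series of size $O(x^{m-1})$) and splits the error into, for each $1 \leq k \leq m$,
\[
\binom{m}{k} c^{m-k} x^{m-k} \sum_{\mathfrak{N}\mathfrak{b} \leq x^{1/r}} \mu_K(\mathfrak{b})\, \mathfrak{N}\mathfrak{b}^{-r(m-k)}\, \Delta_K(x/\mathfrak{N}\mathfrak{b}^r)^k.
\]

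The arithmetic input is a classical pointwise estimate
\[
\Delta_K(y) \ll y^{1-\alpha(n)} (\log y)^{\beta(n)}, \qquad n = [K:\mathbf{Q}],
\]
with exactly the constants $\alpha(n), \beta(n)$ appearing in the statement. For $3 \leq n \leq 9$ this comes from exponent-pair methods applied to the multidimensional lattice-point problem underlying $F_K$; for $n \geq 10$ it follows from a subconvexity bound for $\zeta_K$ on the critical line inserted into a Perron-type contour argument. These bounds replace the Extended Lindelöf input used in \cite{Ta16} and are the source of every improvement over Sittinger's original result.

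Substituting this estimate into the $k$-th error piece and applying
\[
\sum_{\mathfrak{N}\mathfrak{b} \leq Y} \mathfrak{N}\mathfrak{b}^{-s} \ll \begin{cases} 1, & s > 1, \\ \log Y, & s = 1, \\ Y^{1-s}, & s < 1, \end{cases}
\]
(a consequence of the prime ideal theorem for $K$) reduces the analysis to tracking whether $rm - rk\alpha(n)$ exceeds, equals, or falls below $1$. When $rm \geq 3$ every such exponent is strictly greater than $1$ for $k \geq 1$, every inner series converges, and the $k=1$ term dominates, giving $O(x^{m-\alpha(n)}(\log x)^{\beta(n)})$. The two cases with $rm = 2$ are borderline: for $r=1, m=2$ the $k=2$ piece combined with partial-summation losses against $\Delta_K$ is responsible for the extra $(\log x)^{2\beta(n)+1}$ factor, while for $r=2, m=1$ a Dirichlet hyperbola truncation at an optimized cut-off $Y \asymp x^{1/2}$ together with the rescaling $y \mapsto x/\mathfrak{N}\mathfrak{b}^2$ inside $\Delta_K$ converts the exponent $\alpha(n)$ into $\alpha(n)/2$.

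The main obstacle is not any deep analytic step but the careful bookkeeping of logarithmic factors: verifying at the boundary of convergence that partial summation against the ideal-counting function does not accumulate more than the claimed $\beta(n)$, $2\beta(n)$, or $2\beta(n)+1$ powers of $\log x$ is delicate, and each of the $m$ binomial contributions must be handled with a cut-off chosen to balance the tail of the Möbius series against the trivial and $\Delta_K$-based bounds on $F_K$.
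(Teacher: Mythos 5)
Your proposal follows essentially the same route as the paper: the M\"obius-inversion identity $V_m^r(x,K)=\sum_{\mathfrak{N}\mathfrak{b}\le x^{1/r}}\mu(\mathfrak{b})I_K(x/\mathfrak{N}\mathfrak{b}^r)^m$, the Nowak--Lao pointwise bounds $I_K(y)=cy+O(y^{1-\alpha(n)}(\log y)^{\beta(n)})$ in place of the Extended Lindel\"of Hypothesis, binomial expansion to isolate $c^mx^m/\zeta_K(rm)$, and partial-summation estimates of the M\"obius tail and the error sums with the same three-way case split in $rm$. The only cosmetic difference is your attribution of the $\alpha(n)/2$ exponent in the $r=2$, $m=1$ case to a hyperbola-type truncation, whereas in the paper it falls out of estimating the tail $x\sum_{\mathfrak{N}\mathfrak{a}>x^{1/2}}\mu(\mathfrak{a})\mathfrak{N}\mathfrak{a}^{-2}$ by the same partial summation against $I_K$; the substance of the argument is identical.
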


As a result, we can improve Sittinger's result for $n\ge3$. Moreover, we get better results about $E_m^r(x,K)$ than assuming the Extended Lindel\"{o}f Hypothesis for all number field $K$ with $3=[K:\mathbf{Q}]$. 

%導入　予想とか
\section{Dedekind zeta function over $K$}
The Dedekind zeta function $\zeta_K$ over $K$ is considered as a generalization of the Riemann zeta function and $\zeta_K$ is defined as 
\[\zeta_K(s)=\sum_{\mathfrak{a}}\frac1{\mathfrak{N}\mathfrak{a}^s},\]with the sum taken over all nonzero ideals of $\mathcal{O}_K$.

Lindel\"{o}f proposed that for all $\varepsilon>0$, we have $\zeta(1/2+it)=O(t^{\varepsilon})$ as $t\rightarrow\infty$ \cite{Li08}. This hypothesis is implied by the Riemann hypothesis, so this is very important hypothesis. The Extended Lindel\"{o}f Hypothesis is known as a generalization of the Lindel\"{o}f hypothesis.  The statement of this Hypothesis is written as follows.
\begin{conj1}For every $\varepsilon>0$, 
\[\zeta_K\left(\frac12+it\right)=O(t^\varepsilon)\ \text{ as }t\rightarrow\infty.\]
\end{conj1}
Too many mathematicians tried to get better estimate so far. D. R. Heath-Brown \cite{He88} proved that when $n=[K:\mathbf{Q}]$
\begin{equation}
\zeta_K\left(\frac12+it\right)=O(x^{n/6+\varepsilon})\ \text{ as }t\rightarrow\infty.\label{he}
\end{equation}
This result is known as the best result ever. We used the Extended Lindel\"{o}f Hypothesis to consider $I_K(x)$ in \cite{Ta16}. In this paper, we use one of results obtained from (\ref{he}) instead of  the Extended Lindel\"{o}f Hypothesis.

%Main Theorem の証明準備
\section{The number of ideals}
In this section, we prepare for showing the main theorem. Let $I_K(x)$ be the number of ideals of $\mathcal{O}_K$ with $\mathfrak{Na_i}\le x$. We consider the value of $I_K(x)$ to estimate $E_m^r(x,K)$, because we know that 
\begin{equation}
V_m^r(x,K)=\sum_{\mathfrak{N}\mathfrak{a}\le x^{1/r}}\mu(\mathfrak{a})I_K\left(\frac x{\mathfrak{N}\mathfrak{a}^r}\right)^m.\label{vm}
\end{equation}
We will consider the sum (\ref{vm}), where $\mu(\mathfrak{a})$ is the M\"{o}bius function defined as 
\[\mu(\mathfrak{a})=\left\{
\begin{array}{ll}
1 &i\!f \ \mathfrak{a}=1,\\
(-1)^s &i\!f \ \mathfrak{a}=\mathfrak{p}_1\cdots \mathfrak{p}_s, \text{ where $\mathfrak{p}_1,\ldots, \mathfrak{p}_s$ are distinct prime ideals,}\\
0  &i\!f \  \mathfrak{a}\subset\mathfrak{p}^2 \text{ for some prime ideal } \mathfrak{p}.
\end{array}
\right.\] 

Considering with this fact, it is important to study the distribution of lattice points, i.e. that of ideals $I_K(x)$. In 1993, W. G. Nowak obtained the following result about $I_K(x)$ \cite{No93}.
\begin{lem}[cf.\cite{No93}]
\label{no}
When $n=[K:\mathbf{Q}]$, then we get
\[I_K(x)=cx+\left\{
\begin{array}{ll}
O(x^{1-\frac2n+\frac8{n(5n+2)}} (\log x)^{\frac{10}{5n+2}})& \text{ for } 3\le n\le6,\\
O(x^{1-\frac2n+\frac3{2n^2}} (\log x)^{\frac2n})& \text{ for } n\ge 7,
\end{array}
\right.\]
where \[c=\frac{2^{r_1}(2\pi)^{r_2}hR}{w\sqrt{|d_K|}},\]
and:

$h$ is the class number of $K$,

$r_1$ and $r_2$ is the number of real and complex absolute values of $K$ respectively,

$R$ is the regulator of $K$,

$w$ is the number of roots of unity in $\mathcal{O}^*_K$,

$d_K$ is discriminant of $K$.
\end{lem}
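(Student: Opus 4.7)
The plan is to express $I_K(x)$ through Perron's formula and reduce the question to analytic estimates for $\zeta_K(s)$ inside the critical strip. Concretely, starting from
\[
I_K(x) \;=\; \frac{1}{2\pi i}\int_{c-iT}^{c+iT}\zeta_K(s)\,\frac{x^s}{s}\,ds \;+\; O\!\left(\frac{x^{1+\varepsilon}}{T}\right),
\]
with $c = 1 + 1/\log x$, I would shift the contour leftward to a vertical line $\mathrm{Re}(s) = \sigma$ for some $\sigma < 1$. This collects the residue $cx$ at the simple pole $s = 1$ of $\zeta_K$, and leaves a vertical-line integral essentially of size $x^{\sigma}\int_{-T}^{T}|\zeta_K(\sigma+it)|\,dt/(|t|+1)$, together with two horizontal pieces that can be made negligible by choosing $T$ appropriately.

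To extract the exponents claimed, I would combine the functional equation for $\zeta_K$ with the Phragm\'en--Lindel\"of principle to obtain the baseline convexity bound $\zeta_K(\sigma+it)\ll (|t|+1)^{n(1-\sigma)/2+\varepsilon}$ on $0\le \sigma \le 1$, and then sharpen it via van der Corput / Huxley exponent-pair techniques applied to the relevant exponential sums arising from the approximate functional equation. In the low-degree range $3\le n\le 6$, one exploits refined $(k,\ell)$-pairs tuned to the shape of the gamma factor in the functional equation for $\zeta_K$; this is what produces the sharp fraction $8/(n(5n+2))$ together with the logarithmic weight $(\log x)^{10/(5n+2)}$. For $n\ge 7$, the optimization can instead be carried out against a fourth-power mean-value estimate for $\zeta_K$ on vertical lines, yielding the cleaner shape $x^{1-2/n+3/(2n^2)}(\log x)^{2/n}$.

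The main obstacle will be the low-degree regime $3\le n\le 6$, since the savings of $8/(n(5n+2))$ over the trivial exponent $1-2/n$ is the sharpest piece and demands a careful choice of exponent pair matched to the correct vertical line $\sigma \approx 1-2/n$. Once a suitable pair is identified, balancing the Perron tail $x^{1+\varepsilon}/T$ against the vertical-line contribution and solving for the optimal $(\sigma, T)$ is a standard, if tedious, calculation. For $n\ge 7$ the exponent-pair step is less delicate and the optimization essentially reduces to invoking the fourth-moment bound for $\zeta_K$, which is where I would expect the argument to become routine.
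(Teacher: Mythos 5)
The first thing to note is that the paper offers no proof of this statement: Lemma \ref{no} is quoted from Nowak \cite{No93} (hence the ``cf.''), and the paper only combines it with Lao's bound (Lemma \ref{la}) to form Lemma \ref{number}. So your sketch is an attempted reproof of Nowak's theorem itself, and as written it breaks down at exactly the load-bearing step. In the framework you set up --- Perron's formula, a shift to $\mathrm{Re}(s)=\sigma$, and the vertical integral estimated by inserting a pointwise bound $\zeta_K(\sigma+it)\ll (|t|+1)^{\eta(\sigma)}$ --- the balancing of $x^{\sigma}T^{\eta(\sigma)}$ against the truncation error $x^{1+\varepsilon}/T$ yields an error term of size $x^{1-(1-\sigma)/(1+\eta(\sigma))+\varepsilon}$. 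With the convexity bound $\eta(\sigma)=n(1-\sigma)/2$ this is never better than $x^{1-2/(n+2)+\varepsilon}$, and even with the strongest known pointwise input, Heath-Brown's $\zeta_K(1/2+it)\ll t^{n/6+\varepsilon}$ \cite{He88}, it gives only $x^{1-3/(n+6)+\varepsilon}$ --- which is precisely Lao's Lemma \ref{la}, and which is weaker than the exponents $1-\frac2n+\frac8{n(5n+2)}$ and $1-\frac2n+\frac3{2n^2}$ claimed here for every $n\le9$ (indeed, even under the Extended Lindel\"of Hypothesis this single-line pointwise argument cannot reach Nowak's exponent for $n=3$, which is exactly why the paper's unconditional result can beat the conditional one there). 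To reach the stated saving at, say, $\sigma=1-2/n$ you would need $\zeta_K(1-\frac2n+it)\ll t^{4/(5n-2)}$, far beyond anything exponent pairs are known to give for a general degree-$n$ field. So the assertion that suitably ``tuned'' $(k,\ell)$-pairs ``produce the sharp fraction $8/(n(5n+2))$'' is not a step of a proof; it is precisely the nontrivial content of \cite{No93}, and it cannot emerge from the contour-shift-plus-pointwise-bound scheme you describe.

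The structural point is that exponents of the shape $1-\frac2n+o(\frac1n)$ are not obtained by integrating a pointwise bound along one vertical line: the argument in \cite{No93} works instead with a truncated Voronoi/Landau-type formula for the ideal-counting function and estimates the resulting exponential sums and mean values of the coefficients directly (two-dimensional exponent-pair techniques for $3\le n\le6$, a different mean-value argument for $n\ge7$); your appeal to an unspecified ``fourth-power mean-value estimate for $\zeta_K$'' for $n\ge7$ is likewise not an available off-the-shelf input for general $K$. A further concrete mismatch: the lemma asserts explicit logarithmic factors $(\log x)^{10/(5n+2)}$ and $(\log x)^{2/n}$, which cannot survive a setup that loses $x^{\varepsilon}$ both in the Perron truncation and in the convexity/subconvexity bounds. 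If your aim is to use the lemma rather than to reprove Nowak's theorem, the correct move is the paper's: cite \cite{No93}.
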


For all number field with $[K:\mathbf{Q}]\ge10$, Nowak's result is improved by H. Lao in 2010 \cite{La10}. Following result was shown  by using the estimate (\ref{he}).
\begin{lem}[cf. \cite{La10}]
\label{la}
When $n=[K:\mathbf{Q}]$, then we get
\[I_K(x)=c x+O(x^{1-\frac3{n+6}+\varepsilon})\]
for every $\varepsilon>0$,
\end{lem}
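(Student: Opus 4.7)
The plan is to deduce Lemma \ref{la} from Heath-Brown's subconvexity estimate (\ref{he}) by a standard Perron-type contour argument applied to $\zeta_K(s) = \sum_{\mathfrak{a}} \mathfrak{N}\mathfrak{a}^{-s}$. First I would write, via the truncated Perron formula,
\[
I_K(x) = \frac{1}{2\pi i}\int_{c_0-iT}^{c_0+iT} \zeta_K(s)\,\frac{x^s}{s}\, ds + O\!\left(\frac{x^{1+\varepsilon}}{T}\right),
\]
for some fixed $c_0 > 1$ and a truncation parameter $T$ to be chosen. The $O$-term is the usual Perron tail bound, relying on the convergence of $\zeta_K$ at $c_0$ together with the divisor-type estimate for the number of ideals of given norm.

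Next I would shift the contour to the line $\mathrm{Re}(s) = 1/2$. Inside the strip, $\zeta_K$ is holomorphic except at $s=1$, where it has a simple pole whose residue is exactly the constant $c$ appearing in Lemma \ref{no}; this picks up the main term $cx$. What remains is to estimate the shifted vertical integral and the two horizontal segments connecting $c_0 \pm iT$ to $1/2 \pm iT$.

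For the vertical integral, Heath-Brown's bound (\ref{he}) gives $\zeta_K(1/2+it) \ll (|t|+2)^{n/6+\varepsilon}$, so
\[
\int_{-T}^{T}\left|\zeta_K\!\left(\tfrac12+it\right)\right|\frac{x^{1/2}}{|1/2+it|}\, dt \ll x^{1/2}\, T^{n/6+\varepsilon}.
\]
The horizontal pieces are controlled by Phragm\'en-Lindel\"of interpolation between $\zeta_K(1+it) \ll \log|t|$ and $\zeta_K(1/2+it) \ll |t|^{n/6+\varepsilon}$, and they turn out to be of strictly smaller order (they gain a factor of $T^{-1}$ relative to the vertical integral). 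Balancing the vertical error $x^{1/2} T^{n/6+\varepsilon}$ against the Perron truncation error $x^{1+\varepsilon}/T$ yields the condition $T^{1+n/6} = x^{1/2}$, i.e.\ $T = x^{3/(n+6)}$, which produces the total error $O(x^{1 - 3/(n+6)+\varepsilon})$ asserted by the lemma.

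The main obstacle will be obtaining a clean Perron truncation error. Since the Dirichlet coefficients of $\zeta_K$, namely the number of integral ideals of each given norm, are only bounded on average by a divisor-like function, a naive application of the effective Perron formula risks losing factors of $x^\varepsilon$ or $\log x$ that could weaken the final exponent. Avoiding this loss typically requires either a smoothed Perron formula with a suitable mollifier or an approximate functional equation, together with careful bookkeeping of the joint contribution from the horizontal segments; the optimal choice $T = x^{3/(n+6)}$ must be shown to remain the right one after these refinements.
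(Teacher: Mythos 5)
The paper offers no proof of Lemma \ref{la} at all---it is quoted from Lao \cite{La10}, whose argument is precisely the route you sketch: a truncated Perron formula for $\zeta_K$, a contour shift to $\mathrm{Re}(s)=1/2$ picking up the residue $cx$ at $s=1$, Heath-Brown's bound (\ref{he}) on the half-line plus Phragm\'en--Lindel\"of on the horizontal segments, and the balance $T=x^{3/(n+6)}$; so your proposal is correct and essentially the same approach as the cited source. Your closing worry is unnecessary, though: because the exponent in the lemma already carries $+\varepsilon$, losses of $x^{\varepsilon}$ or $\log x$ in the effective Perron formula (the ideal-counting coefficients satisfy $a_m\ll_\varepsilon m^{\varepsilon}$) are harmless, and no smoothing or approximate functional equation is needed.
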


Combine above lemmas, following lemma about the distribution of ideals holds.  
\begin{lem}
\label{number}
When $n=[K:\mathbf{Q}]$, then we get
\[I_K(x)=cx+\left\{
\begin{array}{ll}
O(x^{1-\frac2n+\frac8{n(5n+2)}} (\log x)^{\frac{10}{5n+2}})& \text{ for } 3\le n\le6,\\
O(x^{1-\frac2n+\frac3{2n^2}} (\log x)^{\frac2n})& \text{ for } 7\le n\le9,\\
O(x^{1-\frac3{n+6}+\varepsilon})&\text{ for } n\ge10,
\end{array}
\right.\]
for all $\varepsilon>0$
\end{lem}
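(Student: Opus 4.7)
The statement of Lemma~\ref{number} is essentially a case-by-case compilation: the first two cases already appear verbatim in Nowak's Lemma~\ref{no}, while the third case is exactly Lao's Lemma~\ref{la}. Thus my plan is not to prove a new estimate but to show that taking the minimum of the two known bounds, case by case in $n$, produces the piecewise formula displayed in the lemma.

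The plan is as follows. First, for $3\le n\le 6$ and for $7\le n\le 9$, simply invoke Lemma~\ref{no}, which already gives exactly the bounds in the first two branches, so nothing needs to be done. Second, for $n\ge 10$, I would invoke Lemma~\ref{la}, which yields $I_K(x)=cx+O(x^{1-3/(n+6)+\varepsilon})$ for every $\varepsilon>0$; this is exactly the third branch. The only substantive point is to verify that the choice of which lemma to apply is optimal at the transition, i.e.\ that Lao's bound is genuinely superior to Nowak's for $n\ge 10$ while Nowak's is better for $n\le 9$. This reduces to comparing the exponents $\tfrac 2n-\tfrac{3}{2n^2}$ (from Nowak) and $\tfrac{3}{n+6}$ (from Lao): the latter is larger exactly when $n\ge 10$, as can be seen from
\[
\frac{3}{n+6}-\left(\frac 2n-\frac{3}{2n^2}\right)=\frac{(n-10)(n+?)}{2n^2(n+6)}\quad\text{(for suitable constants),}
\]
and a direct check at $n=9$ and $n=10$ confirms the crossover. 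Since Lao's error carries an extra $x^{\varepsilon}$, one must also note that this $\varepsilon$ can be absorbed comfortably because the strict inequality $\tfrac{3}{n+6}>\tfrac 2n-\tfrac{3}{2n^2}$ at $n\ge10$ leaves room to spare.

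The main (and essentially only) obstacle is bookkeeping: one must confirm that the logarithmic factor $(\log x)^{2/n}$ present in Nowak's estimate at $n\ge 7$ does not spoil the optimality of Lao's polynomial-savings bound for $n\ge 10$, but since Lao provides a genuine power savings over Nowak in that range, any logarithm is dominated by $x^{\varepsilon}$. With this verification done, the statement of the lemma follows by taking the better of the two bounds in each range.
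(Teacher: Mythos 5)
Your proposal is correct and matches the paper exactly: the paper gives no separate argument for Lemma~\ref{number}, simply quoting Lemma~\ref{no} for $3\le n\le 9$ and Lemma~\ref{la} for $n\ge 10$ (the crossover check you sketch, where $2n^2-21n+18$ changes sign between $n=9$ and $n=10$, is optional justification for the split rather than a logical necessity, and your tentative factorization $(n-10)(n+?)$ is not exact, but the direct evaluation at $n=9,10$ that you cite does the job).
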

It is too elementary result that the number of lattice points $(\mathfrak{a}_1, \mathfrak{a}_2,\ldots,\mathfrak{a}_m)$ in $\mathbf{Q}^m$ with $\mathfrak{Na_i}\le x$ is $[x]^m$. On the other hand, we have no explicit results about the distribution of lattice points in $K^m$.
By using Lemma \ref{number}, we can estimate the number of lattice points $(\mathfrak{a}_1, \mathfrak{a}_2,\ldots,\mathfrak{a}_m)$ in $K^m$ with $\mathfrak{Na_i}\le x$.

%main theorem
\section{The proof of the main theorem}
In this section we will show the main theorem.  Using the equation (\ref{vm}) and Lemma \ref{number}, an approximation formula for $E_m^r(x,K)$ is obtained.

\begin{thm}
\label{main}
Let $\alpha(n)$ and $\beta(n)$ be constants \[\alpha(n)=\left\{
\begin{array}{ll}
\frac2n-\frac8{n(5n+2)}&\text{ if } 3\le n\le6,\\
\frac2n-\frac3{2n^2}&\text{ if } 7\le n\le9\rule[-2mm]{0mm}{6mm},\\
\frac3{n+6}-\varepsilon&\text{ if } n\ge10,
\end{array}
\right.
\text{ and \ } \beta(n)=\left\{
\begin{array}{ll}
\frac{10}{5n+2}&\text{ if } 3\le n\le6,\\
\frac 2n&\text{ if } 7\le n\le9\rule[-2mm]{0mm}{6mm},\\
0&\text{ if } n\ge10.
\end{array}
\right.\]

When $n=[K:\mathbf{Q}]$, then we get \[E_m^r(x,K)=\left\{
\begin{array}{ll}
O(x^{m-\alpha(n)}(\log x)^{\beta(n)})&\text{ if } rm\ge3,\\
O(x^{2-\alpha(n)}(\log x)^{2\beta(n)+1})&\text{ if } r=1 \text{ and } m=2,\\
O(x^{1-\alpha(n)/2}(\log x)^{2\beta(n)})&\text{ if } r=2 \text{ and } m=1,
\end{array}
\right.\]
for all $\varepsilon>0$.
\end{thm}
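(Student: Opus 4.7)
The plan is to combine the Möbius identity (\ref{vm}) with Lemma \ref{number}. Writing $I_K(y)=cy+R(y)$ where $R(y)\ll y^{1-\alpha(n)}(\log y)^{\beta(n)}$, I would expand
\[
I_K\!\left(\frac{x}{\mathfrak{N}\mathfrak{a}^r}\right)^{\!m}=\sum_{k=0}^{m}\binom{m}{k}(cy)^{m-k}R(y)^{k},\qquad y=\frac{x}{\mathfrak{N}\mathfrak{a}^r},
\]
substitute into (\ref{vm}), and estimate each $k$-th piece separately against $\sum_{\mathfrak{N}\mathfrak{a}\le x^{1/r}}\mu(\mathfrak{a})$.

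For the $k=0$ piece I factor out $c^{m}x^{m}$ and complete the sum $\sum_{\mathfrak{N}\mathfrak{a}\le x^{1/r}}\mu(\mathfrak{a})/\mathfrak{N}\mathfrak{a}^{rm}$ to the full series $1/\zeta_{K}(rm)$; the tail is controlled by the trivial $I_{K}(t)=O(t)$ and contributes a power of $x$ well below $x^{m-\alpha(n)}$ in all the stated ranges. This yields the advertised main term $c^{m}x^{m}/\zeta_{K}(rm)$.

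For the error the $k\ge1$ pieces are bounded, using $|\mu|\le 1$ and the estimate on $R$, by
\[
\ll x^{m-k\alpha(n)}(\log x)^{k\beta(n)}\sum_{\mathfrak{N}\mathfrak{a}\le x^{1/r}}\mathfrak{N}\mathfrak{a}^{-r(m-k\alpha(n))}.
\]
When $rm\ge3$, a check using $\alpha(n)<1$ shows $r(m-\alpha(n))>1$ for every $k\ge 1$ in the relevant range, so the Dirichlet sum is $O(1)$ and the dominant contribution comes from $k=1$, giving $O(x^{m-\alpha(n)}(\log x)^{\beta(n)})$; the $k\ge 2$ pieces carry a strictly smaller power of $x$ and are absorbed.

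The main obstacle is the borderline cases $(r,m)=(1,2)$ and $(r,m)=(2,1)$, where the exponent $r(m-k\alpha(n))$ can dip below $1$ and the Dirichlet sums no longer converge. Here I would apply Abel summation together with the full estimate $I_{K}(t)=ct+O(t^{1-\alpha(n)}(\log t)^{\beta(n)})$ from Lemma \ref{number} to control $\sum_{\mathfrak{N}\mathfrak{a}\le T}\mathfrak{N}\mathfrak{a}^{-s}$ uniformly for $s$ near $1$, producing a power gain $T^{1-s}$ at the price of an extra $(\log T)^{\beta(n)}$ factor from the remainder in the lemma. The additional logarithmic powers $(\log x)^{2\beta(n)+1}$ and $(\log x)^{2\beta(n)}$ in the stated error exponents then arise from combining the $k=1$ contribution with the $k=2$ (respectively the single $k=1$) borderline sum estimate. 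Adding these to the $k=0$ tail error and to the clean $rm\ge3$ estimate completes the proof.
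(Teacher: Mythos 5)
Your proposal is correct and is essentially the paper's own argument: it starts from the identity (\ref{vm}) and Lemma \ref{number}, expands $I_K(x/\mathfrak{N}\mathfrak{a}^r)^m$ by the binomial theorem, completes the M\"obius sum to $1/\zeta_K(rm)$ with a tail estimate, and bounds the remaining norm sums elementarily; your Abel-summation treatment of the borderline cases $(r,m)=(1,2),(2,1)$ is in fact slightly sharper than the paper's integral comparison (which uses the lemma's error bound as a pointwise increment ``density''), so the stated $O$-bounds follow a fortiori. One small correction: for $rm\ge3$ the convergence criterion is $r(m-k\alpha(n))>1$ for every $1\le k\le m$, and in the worst case $rm=3$, $k=m$ this needs $\alpha(n)<2/3$ --- which does hold, since $\alpha(n)\le\alpha(3)=26/51$ --- rather than merely $\alpha(n)<1$ as you wrote.
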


\begin{proof}As we remarked above, we know the equation (\ref{vm}); \[V_m^r(x,K)=\sum_{\mathfrak{N}\mathfrak{a}\le x^{1/r}}\mu(\mathfrak{a})I_K\left(\frac x{\mathfrak{N}\mathfrak{a}^r}\right)^m.\]
\begin{align*}
\intertext{First we use Lemma \ref{number} and the binomial theorem. Then we obtain}
V_m^r(x,K)&=\sum_{\mathfrak{N}\mathfrak{a}\le x^{1/r}}\mu(\mathfrak{a})\left(\frac {cx}{\mathfrak{N}\mathfrak{a}^r}+O\left(\left(\frac x{\mathfrak{N}\mathfrak{a}^r}\right)^{1-\alpha(n)}(\log x/\mathfrak{Na}^r)^{\beta(n)}\right)\right)^m\\
&=(cx)^m\sum_{\mathfrak{N}\mathfrak{a}\le x^{1/r}}\frac {\mu(\mathfrak{a})}{\mathfrak{N}\mathfrak{a}^{rm}}+O\left(\sum_{\mathfrak{N}\mathfrak{a}\le x^{1/r}}\left(\frac {x}{\mathfrak{N}\mathfrak{a}^r}\right)^{m-\alpha(n)}(\log x)^{\beta(n)}\right).\\
\intertext{By using the fact $\displaystyle{\sum_{\mathfrak{a}}\frac {\mu(\mathfrak{a})}{\mathfrak{N}\mathfrak{a}^{rm}}=\frac1{\zeta_K(rm)}}$, we get}
V_m^r(x,K)&=\frac{c^m}{\zeta_K(rm)}x^m-(cx)^m\sum_{\mathfrak{N}\mathfrak{a}> x^{1/r}}\frac {\mu(\mathfrak{a})}{\mathfrak{N}\mathfrak{a}^{rm}}+O\left(\sum_{\mathfrak{N}\mathfrak{a}\le x^{1/r}}\left(\frac {x}{\mathfrak{N}\mathfrak{a}^r}\right)^{m-\alpha(n)}(\log x)^{\beta(n)}\right).\\
\intertext{The first term $(cx)^m/\zeta_K(rm)$ is known as the principal term of $V_m^r(x,K)$, so $E_m^r(x,K)$ is left terms. Thus}
E_m^r(x,K)&=O\left(x^m\sum_{\mathfrak{N}\mathfrak{a}> x^{1/r}}\frac {\mu(\mathfrak{a})}{\mathfrak{N}\mathfrak{a}^{rm}}+\sum_{\mathfrak{N}\mathfrak{a}\le x^{1/r}}\left(\frac {x}{\mathfrak{N}\mathfrak{a}^r}\right)^{m-\alpha(n)}(\log x)^{\beta(n)}\right).\\
\intertext{Now we estimate how fast above first sum grows. From Theorem \ref{number} we can estimate $I_K(x)-I_K(x-1)=O\left(x^{1-\alpha(n)}(\log x)^{\beta(n)}\right)$, so we have}
\sum_{\mathfrak{N}\mathfrak{a}> x^{1/r}}\frac {\mu(\mathfrak{a})}{\mathfrak{N}\mathfrak{a}^{rm}}&=O\left(x^m\int_{x^{1/r}}^{\infty}\frac{y^{1-\alpha(n)}(\log y)^{\beta(n)}}{y^{rm}}\ dy\right)\\
&=O(x^{(2-\alpha(n))/r}(\log x)^{\beta(n)}).\\
\intertext{Next we estimate how fast above second sum grows. As well as first sum, $I_K(x)-I_K(x-1)=O\left(x^{1-\alpha(n)}(\log x)^{\beta(n)}\right)$ holds, so we have}
\sum_{\mathfrak{N}\mathfrak{a}\le x^{1/r}}\left(\frac {x}{\mathfrak{N}\mathfrak{a}^r}\right)^{m-1/2+\varepsilon}&=O\left(x^{m-\alpha(n)}\left(1+\int_1^{x^{1/r}}\frac{y^{1-\alpha(n)}(\log y)^{\beta(n)}}{y^{r(m-\alpha(n))}}(\log y)^{\beta(n)}\ dy\right)\right)\\
&=\left\{
\begin{array}{ll}
O(x^{m-\alpha(n)}(\log x)^{\beta(n)})&i\!f \ rm\ge3,\\
O(x^{2-\alpha(n)}(\log x)^{2\beta(n)+1})&i\!f\   r=1 \text{ and } m=2,\\
O(x^{1-\alpha(n)/2}(\log x)^{2\beta(n)})&i\!f \ r=2 \text{ and } m=1.
\end{array}
\right.\\
\intertext{Hence we get}
E_m^r(x,K)&=\left\{
\begin{array}{ll}
O(x^{m-\alpha(n)}(\log x)^{\beta(n)})&i\!f \ rm\ge3,\\
O(x^{2-\alpha(n)}(\log x)^{2\beta(n)+1})&i\!f \ r=1 \text{ and } m=2,\\
O(x^{1-\alpha(n)/2}(\log x)^{2\beta(n)})&i\!f \ r=2 \text{ and } m=1.
\end{array}
\right.
\intertext{This proves the main Theorem.}
\end{align*}
\end{proof}
In 2010, B. D. Sittinger showed the following theorem about lattice points with relatively $r$-prime over number field $K$.
\begin{thm}[cf. \cite{St10}]
When $n=[K:\mathbf{Q}]$

\[E_m^r(x,K)=\left\{
\begin{array}{ll}
O(x^{m-1/n}) & \text{ if } m\ge3, \text{ or } m=2 \text{ and } r\ge2,\\
O(x^{2-1/n}\log x)& \text{ if }m=2 \text{ and } r=1,\\
O(x^{1-1/n}\log x) & \text{ if }m=1 \text{ and }  \frac{n(r-2)}{r-1}=1,\\
O(x^{1-1/n})& \text{ if }m=1 \text{ and } \frac{n(r-2)}{r-1}>1,\\
O(x^{(2-1/n)/r})&\text{ if }m=1 \text{ and } \frac{n(r-2)}{r-1}<1.
\end{array}
\right.\]

\end{thm}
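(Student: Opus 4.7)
The plan is to substitute the estimate of Lemma \ref{number} into the right-hand side of (\ref{vm}). Writing $I_K(y)=cy+R(y)$ with $R(y)=O(y^{1-\alpha(n)}(\log y)^{\beta(n)})$, the binomial expansion of $I_K(x/\mathfrak{N}\mathfrak{a}^r)^m$ produces the dominant term $(cx/\mathfrak{N}\mathfrak{a}^r)^m$ together with cross terms each bounded in absolute value by $(x/\mathfrak{N}\mathfrak{a}^r)^{m-\alpha(n)}(\log x)^{\beta(n)}$, after the elementary replacement $\log(x/\mathfrak{N}\mathfrak{a}^r)\le\log x$. Summing the dominant term against $\mu(\mathfrak{a})$, extending the range to all ideals, and invoking the Dirichlet series identity $\sum_\mathfrak{a}\mu(\mathfrak{a})/\mathfrak{N}\mathfrak{a}^{rm}=1/\zeta_K(rm)$ produces the expected principal term $(cx)^m/\zeta_K(rm)$, up to a tail $(cx)^m\sum_{\mathfrak{N}\mathfrak{a}>x^{1/r}}\mu(\mathfrak{a})/\mathfrak{N}\mathfrak{a}^{rm}$. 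Thus $E_m^r(x,K)$ is bounded by the sum of this tail and the bulk cross-term sum.

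Both sums then reduce to one-variable integral estimates via Abel summation, using Lemma \ref{number} in the form $I_K(y)-I_K(y-1)=O(y^{1-\alpha(n)}(\log y)^{\beta(n)})$ as a (crude but adequate) bound for the density of ideals of a given norm. The tail contributes at most $x^m\int_{x^{1/r}}^{\infty}y^{1-\alpha(n)-rm}(\log y)^{\beta(n)}\,dy=O(x^{(2-\alpha(n))/r}(\log x)^{\beta(n)})$, which in each regime of the theorem is already dominated by the bulk estimate. The bulk sum is at most $x^{m-\alpha(n)}(\log x)^{\beta(n)}\int_1^{x^{1/r}}y^{1-\alpha(n)-r(m-\alpha(n))}(\log y)^{\beta(n)}\,dy$, and the behaviour of this final integral as $x\to\infty$ is what produces the three-way split in the statement.

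When $rm\ge 3$ the $y$-exponent inside the integrand is strictly less than $-1$, the integral is bounded, and one obtains the clean bound $O(x^{m-\alpha(n)}(\log x)^{\beta(n)})$. In the borderline case $r=1,m=2$ the exponent equals exactly $-1$ and the integral contributes an extra $(\log x)^{\beta(n)+1}$, yielding $O(x^{2-\alpha(n)}(\log x)^{2\beta(n)+1})$. In the other borderline case $r=2,m=1$ the exponent is $-1+\alpha(n)$, so the integral contributes a genuine power $x^{\alpha(n)/2}$ together with a $(\log x)^{\beta(n)}$ factor, producing the final bound $O(x^{1-\alpha(n)/2}(\log x)^{2\beta(n)})$.

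The proof is essentially mechanical once Lemma \ref{number} is in hand; the chief technical nuisance is the careful bookkeeping at the boundary $rm=2$, where small changes in the $y$-exponent (whether exactly $-1$ or slightly above) change the output from a logarithmic blow-up to a genuine power-law blow-up, and where the compound log exponents $2\beta(n)+1$ and $2\beta(n)$ must be tracked through both the outer $(\log x)^{\beta(n)}$ factor and the $(\log y)^{\beta(n)}$ weight emerging inside the integral.
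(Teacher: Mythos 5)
This statement is Sittinger's theorem, which the paper does not prove at all: it is quoted from \cite{St10} only for comparison with Theorem \ref{main}, and Sittinger's own argument rests on the classical estimate $I_K(x)=cx+O(x^{1-1/n})$, valid for every degree $n$. Your write-up proves a different statement. By inserting Lemma \ref{number} into (\ref{vm}) you arrive at the three bounds $O(x^{m-\alpha(n)}(\log x)^{\beta(n)})$, $O(x^{2-\alpha(n)}(\log x)^{2\beta(n)+1})$, $O(x^{1-\alpha(n)/2}(\log x)^{2\beta(n)})$ --- that is Theorem \ref{main} of the paper, not the theorem above: the exponent $1/n$ and the five-case structure never appear, and you even describe the statement as a ``three-way split''. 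Moreover Lemma \ref{number} only covers $n\ge3$, while the quoted theorem is asserted for all $n=[K:\mathbf{Q}]$, so even the fallback argument ``my bounds are stronger, hence they imply Sittinger's'' cannot work for $n=1,2$ (and for $m=1$ it would still require checking tail versus bulk, see below).

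The concrete mathematical gap is your blanket claim that the tail term $(cx)^m\sum_{\mathfrak{N}\mathfrak{a}>x^{1/r}}\mu(\mathfrak{a})\mathfrak{N}\mathfrak{a}^{-rm}=O\bigl(x^{(2-\alpha(n))/r}(\log x)^{\beta(n)}\bigr)$ ``is already dominated by the bulk estimate in each regime''. That is false, and it is exactly the phenomenon the last three cases of the statement encode. Running the same computation with the error exponent $1-1/n$ in place of $1-\alpha(n)$, the tail contributes $x^{(2-1/n)/r}$ while the bulk contributes $x^{1-1/n}$ when $m=1$, and comparing exponents gives $(2-1/n)/r\gtrless 1-1/n$ exactly according to whether $n(r-2)/(r-1)\lessgtr1$, with equality producing the additional $\log x$; this is where the trichotomy $O(x^{(2-1/n)/r})$, $O(x^{1-1/n})$, $O(x^{1-1/n}\log x)$ comes from, and your argument never addresses it. (The same domination claim can even fail inside your own setting: for $m=1$, $r=3$, $n=3$ one has $\alpha(3)=26/51>1/2$, so $(2-\alpha)/3>1-\alpha$ and the tail exceeds the bulk.) A correct blind proof of the quoted theorem must start from $I_K(x)=cx+O(x^{1-1/n})$ and carry out this tail/bulk comparison case by case, rather than reproduce the proof of Theorem \ref{main}.
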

Considering the Sittinger's result \cite{St10}, we can improve the order of $E_m(x,K)$ for all number field $K$ with $[K:\mathbf{Q}]\ge3$. 

\section{appendix}
In this paper we considered about all number fields, but our results can be improved for all abelian extension $K/\mathbf{Q}$. It is well known that for an abelian extension $K/\mathbf{Q}$ with $[K:\mathbf{Q}]\ge4$,
\begin{equation}
I_K(x)=cx+O(x^{1-3/(n+2)}).\label{ti}
\end{equation}
Using this approximation (\ref{ti}), the following better result for abelian extension field $K$ is obtained in a way similar to the proof of Theorem \ref{main}.
\begin{thm}
For all abelian extension field $K$ with $[K:\mathbf{Q}]\ge4$, we get
 \[E_m^r(x,K)=\left\{
\begin{array}{ll}
O(x^{1-\frac3{2(n+2)}+\varepsilon})&\text{ if } r=2 \text{ and } m=1,\\
O(x^{m-\frac3{n+2}+\varepsilon})&\ otherwise.\\
\end{array}
\right.\]
\end{thm}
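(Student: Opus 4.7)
The plan is to follow the proof of Theorem~\ref{main} essentially verbatim, substituting the abelian estimate~(\ref{ti}) for Lemma~\ref{number}. Write $\alpha:=3/(n+2)$ and note that (\ref{ti}) carries no logarithmic factor, so the analogue of $\beta(n)$ is zero throughout. Starting from~(\ref{vm}), I would expand $I_K(x/\mathfrak{N}\mathfrak{a}^r)^m$ by the binomial theorem using~(\ref{ti}), and peel off the principal term via $\sum_{\mathfrak{a}}\mu(\mathfrak{a})/\mathfrak{N}\mathfrak{a}^{rm}=1/\zeta_K(rm)$. Exactly as in the proof of Theorem~\ref{main}, this reduces matters to estimating
\[
E_m^r(x,K)=O\!\left(x^m\!\!\!\sum_{\mathfrak{N}\mathfrak{a}>x^{1/r}}\!\!\frac{\mu(\mathfrak{a})}{\mathfrak{N}\mathfrak{a}^{rm}}+\sum_{\mathfrak{N}\mathfrak{a}\le x^{1/r}}\!\Bigl(\frac{x}{\mathfrak{N}\mathfrak{a}^r}\Bigr)^{m-\alpha}\right).
\]

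Next I would bound each piece by partial summation against~(\ref{ti}). The tail contributes $O(x^{(2-\alpha)/r})$, and the second sum reduces (again via Abel summation against~(\ref{ti})) to
\[
x^{m-\alpha}\Bigl(1+\int_1^{x^{1/r}}y^{\,1-rm+(r-1)\alpha}\,dy\Bigr).
\]
Outside the single case $(r,m)=(2,1)$, the exponent $c:=1-rm+(r-1)\alpha$ is at most $-1$; the integral is then at worst logarithmic in $x$, the tail is subdominant, and one obtains $O(x^{m-\alpha})$ up to a harmless $x^{\varepsilon}$. This matches the ``otherwise'' branch $O(x^{m-3/(n+2)+\varepsilon})$ of the statement.

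In the critical case $r=2$, $m=1$ (for which the hypothesis $n\ge 4$ forces $\alpha\le 1/2$), the exponent becomes $c=\alpha-1\in[-1/2,0)$, the integral evaluates to $O(x^{\alpha/2})$, and both the main error and the tail contribute $O(x^{1-\alpha/2})=O(x^{1-3/(2(n+2))+\varepsilon})$, which is the remaining branch of the statement. The only obstacle worth flagging is precisely this critical case: recognising that here the divergent integral forces the halved exponent $\alpha/2$ rather than $\alpha$, which is what degrades the savings from $3/(n+2)$ to $3/(2(n+2))$. Everything else is mechanical substitution into the scaffolding already assembled for Theorem~\ref{main}.
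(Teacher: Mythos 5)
Your proposal is correct and is essentially the paper's own argument: the paper gives no separate proof here, stating only that the result follows "in a way similar to the proof of Theorem \ref{main}" with the abelian estimate (\ref{ti}) in place of Lemma \ref{number}, which is exactly the substitution $\alpha=3/(n+2)$, $\beta=0$ that you carry out, including the correct identification of the critical case $r=2$, $m=1$ where the divergent integral halves the saving. One small slip: for $n\ge4$ the exponent in that case satisfies $c=\alpha-1\in(-1,-\tfrac12]$, not $[-\tfrac12,0)$, but since your evaluation only uses $c>-1$ and $c+1=\alpha$, the stated conclusion $O(x^{1-\alpha/2})$ is unaffected.
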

It goes without saying that the better approximation formula for $I_K(x)$ we have, the better results for $E_m^r(x,K)$ we get. On the contrary, if we can get the exact order of $E_m^r(x,K)$, the explicit approximation formula for $I_K(x)$ is obtained from considering the exact order of $E_m^r(x,K)$. And it is proven that there are some relation between the distribution of lattice points with relatively $r$-prime and problems in the Number Theory in \cite{Ta16}.  We think it is important that studying the exact order of $E_m^r(x,K)$ without using an approximation formula for $I_K(x)$.

%    Text of article.

%    Bibliographies can be prepared with BibTeX using amsplain,
%    amsalpha, or (for "historical" overviews) natbib style.

%    Insert the bibliography data here.

\end{document}